		 \newtheorem{theorem}{Theorem}
          \newtheorem{example}[theorem]{Example}
          \newtheorem{remark}[theorem]{Remark}
          \newtheorem{lemma}[theorem]{Lemma}
          \newtheorem{corollary}[theorem]{Corollary}
   \author{Bao Quoc Tang}
   \address{Institute for Mathematics and Scientific Computing,\\
   University of Graz, Austria}
   \email{quoc.tang@uni-graz.at}
    \title[Global classical solutions to reaction-diffusion systems]{Global classical solutions to reaction-diffusion systems in one and two dimensions}
\begin{document}

              \begin{abstract}
                   The global existence of classical solutions to reaction-diffusion systems in dimensions one and two is proved. The considered systems are assumed to satisfy an {\it entropy inequality} and have nonlinearities with at most cubic growth in 1D or at most quadratic growth in 2D. This global existence was already proved in [T. Goudon and A. Vasseur, Ann. Sci. \'Ecole Norm. Sup. (4) 43 (2010), no. 1, 117--142] by a De Giorgi method.  In this paper, we give a simplified proof by using a modified Gagliardo-Nirenberg inequality and the regularity of the heat operator. Moreover, the classical solution is proved to have $L^{\infty}$-norm growing at most polynomially in time. As an application, solutions to chemical reaction-diffusion systems satisfying the so-called complex balance condition are proved to converge exponentially to equilibrium in $L^{\infty}$-norm.
              \end{abstract}	
         
         \maketitle

\noindent
{\bf Classification AMS 2010}: 35K57, 35B40, 35Q92, 80A30, 80A32

\medskip

\noindent
{\bf Keywords:} Reaction-diffusion systems; Global classical solutions; Entropy estimates; Chemical reaction networks.

\section{Introduction}
\medskip
Let $\Omega \subset \mathbb R^d$ with $d=1,2$ be a bounded domain with smooth boundary $\partial\Omega$ in case $d = 2$ (e.g. $\partial\Omega$ is of class $C^{2+\epsilon}$ for $\epsilon>0$). In this paper, we study the global existence of classical soltuions to nonlinear reaction-diffusion system for $u = (u_1, \ldots, u_N): \Omega\times\mathbb R_+ \to \mathbb R^N$
\begin{equation}\label{cubic-system}
\begin{aligned}
\partial_t u_i - d_i\Delta u_i &= f_i(u), && x\in\Omega, \; t>0,\\
\nabla u_i\cdot \nu &= 0, && t>0,\\
u_i(x,0) &= u_{i,0}(x), &&x\in\Omega
\end{aligned}
\end{equation}
where $d_i>0$ for $i=1,\ldots, N$ are diffusion coefficients, $\nu$ is the outward normal on $\partial\Omega$, the nonlinearities $f_i: \mathbb R^N \to \mathbb R$ are assumed satisfy some properties which will be specified later.

\medskip
Reaction-diffusion systems (RD systems) of type \eqref{cubic-system} are used to model many phenomena in natural sciences such as physics, chemistry or biology. The global existence of (classical, strong, weak) solutions to systems of type \eqref{cubic-system} is thus of importance, and has been extensively studied in literature and become nowadays a classical topic (see e.g. \cite{Ama85,HRP87,Ladyzhenskaya,Mog89,Rot87,HLV98} and references therein). However, it still poses many open problems, since it is usually difficult to obtain suitable {\it a priori} estimates of solutions to general RD systems (maximum principle fails to apply to RD systems except very special cases). 

\medskip
This paper studies global existence of classical solutions to RD system \eqref{cubic-system} in one and two dimensions, where the nonlinearities $f_i: \mathbb R^N \to \mathbb R$ are assumed to be locally Lipschitz continuous and satisfy the following conditions:
\begin{itemize}
	\item (Positivity preserving) For all $u\in \mathbb R_+^N$ it holds
	\begin{equation}\label{positive-preserve}\tag{$\mathbf{P}$}
	f_i(u_1, \ldots, u_{i-1}, 0, u_{i+1}, \ldots, u_N) \geq 0 \quad \text{ for all } \quad i = 1, \ldots, N.
	\end{equation}
	\item (Entropy inequality) There exists $\mu_1, \ldots, \mu_N \in \mathbb R$, such that
	\begin{equation}\label{entropy-inequality}\tag{$\mathbf{E}$}
	\sum_{i=1}^{N}f_i(u)(\mu_i + \log u_i) \leq 0 \quad \text{ for all } \quad u\in \mathbb R^N_+.
	\end{equation}
	\item (Growth condition) For all $u\in \mathbb R^N$ it holds for all $i=1,\ldots, N$
	\begin{equation}\label{cubic-growth}\tag{$\mathbf{G}$}
	|f_i(u)| \leq K(|u|^{\mu} + 1),
	\end{equation}
	for some constant $K>0$, where the growth rate $\mu$ satisfies
	\[
	\mu \leq 3 \quad \text{ when } \quad d = 1,
	\]
	and 
	\[
	\mu \leq 2 \quad \text{ when } \quad d = 2.
	\]
	Here $|u| = \sum_{i=1}^{N}|u_i|$ for all $u\in \mathbb R^N$.
\end{itemize}
\begin{example}
	We give an example of a reaction-diffusion system satisfying \eqref{positive-preserve}, \eqref{entropy-inequality} and \eqref{cubic-growth}. Consider the following reversible chemical reaction involving sulfur dioxide, oxygen and sulfur trioxide
	\begin{equation*}
	2SO_2 + O_2 \leftrightharpoons 2SO_3
	\end{equation*}
	where the forward and backward reaction rate constants are assumed to be one. Denote by $u_1, u_2, u_3$ the concentrations of $SO_2$, $O_2$ and $SO_3$ respectively. The corresponding reaction-diffusion system for $u = (u_1, u_2, u_3)$ reads as
	\begin{equation*}
	\begin{aligned}
	\partial_t u_1 - d_1\Delta u_1 &= -2(u_1^2u_2 - u_3^2) &&=: f_1(u),\\
	\partial_t u_2 - d_2\Delta u_2 &= -u_1^2u_2 + u_3^2 &&=: f_2(u),\\
	\partial_t u_3 - d_3\Delta u_3 &= +2(u_1^2u_2 - u_3^2) &&=: f_3(u),
	\end{aligned}
	\end{equation*}
	with homogeneous Neumann boundary condition $\nabla u_i \cdot \nu = 0$ and initial data $u_i(\cdot,0) = u_{i,0}(\cdot)$. Here $d_1, d_2, d_3>0$ are positive diffusion coefficients. It is obvious that $f_i(u)$ satisfies the positivity preserving property \eqref{positive-preserve} and the growth condition \eqref{cubic-growth} in one dimension. Moreover, with $\mu_1 = \mu_2 = \mu_3 = 0$ we have
	\begin{equation*}
	\sum_{i=1}^{3}f_i(u)(\mu_i + \log u_i) = -(u_1^2u_2 - u_3^2)(\log{(u_1^2u_2)} - \log {(u_3^2)}) \leq 0
	\end{equation*}
	since the function $\log x$ is increasing. That means the entropy inequality \eqref{entropy-inequality} is satisfied.
\end{example}

\medskip
Condition \eqref{positive-preserve} has a simple interpretation: when the $i$-th concentration is zero then it cannot be consumed in the reaction. This assumption is sometimes called {\it quasi-positivity} and it helps to obtain the positivity of solutions provided initial data are positive (see e.g. \cite{Pao}). The entropy inequality \eqref{entropy-inequality} provides a control on solutions in $L\log L$-norm (see Lemma \ref{entropy-estimates}), and it is guaranteed in many physical systems, see e.g. chemical reaction networks satisfying a complex balanced condition in Section \ref{applications}. Loosely speaking, \eqref{entropy-inequality} means that the free energy of the corresponding system is dissipating. Condition \eqref{cubic-growth} is the only "real" restriction of systems under consideration in this paper. This restriction on the growth of nonlinearities is necessary for obtaining from $L\log L$-bound (implied by \eqref{entropy-inequality}) suitable {\it a priori} estimates, which in turn lead to global classical solutions. Possible extensions to higher orders or higher dimensions, for example $\mu = 2$ and $d\geq 3$, remains as an open problem.

\medskip
Systems of type \eqref{cubic-system} with the conditions \eqref{positive-preserve}, \eqref{entropy-inequality} 
has been extensively investigated (see e.g. \cite{CDF14,CV09,DFPV07,FLS16,FL16,GH97,GV10,Rot87}).
We refer the reader to the review paper \cite{Pie-Survey} for a detailed discussion. It is worth mentioning that the global existence of classical solution to \eqref{cubic-system} is widely open in general.
For weaker notions of solutions, we refer the reader to \cite{Fis15} where the author showed the global existence of renormalised solutions under the conditions \eqref{positive-preserve} and \eqref{entropy-inequality}.

\medskip
To put our work into context, let us recall that in \cite{CDF14}, by using a duality method the authors have proved the global existence of classical solutions to \eqref{cubic-system} with quadratic nonlinearities, i.e. $|f_i(u)| \leq C(|u|^2 + 1)$, in one and two dimensions. When the dimension is three or higher, the classical solution is proved global provided the diffusion coefficients are closed to each other. We remark that, since the duality method is independent of dimensions, the arguments in \cite{CDF14} are not directly applicable to systems with cubic nonlinearities (even in one dimension).
It's also worth noting that the global classical solutions to quadratic systems in higher dimensions (without the assumption on diffusion coefficients) had remained open until the very recent preprint \cite{CGV17}, in which the authors solved the problem (even for slightly super quadratic - depending on the dimension - systems) by utilising the De Giorgi method. 

{On the other hand, it was proved in \cite{GV10} that  system \eqref{cubic-system} with conditions \eqref{positive-preserve}, \eqref{entropy-inequality} and \eqref{cubic-growth} has global classical solutions. However, the proof therein was based on the famous De Giorgi method, and apparently did not provide any bounds (w.r.t. time) on the classical solutions.}

\medskip
In this paper, with the help of a modified Gagliardo-Nirenberg's inequality and the regularity of the heat operator, we prove by simple arguments that system \eqref{cubic-system} with conditions \eqref{positive-preserve}, \eqref{entropy-inequality} and \eqref{cubic-growth} has a unique global classical solution. One advantage of our results is that the $L^{\infty}$-norm of the solution grows at most polynomially in time. This is usually called {\it slowly growing a-priori bounds}, see e.g. \cite{DF08,TV}. This result is very helpful, especially when the asymptotic behaviour of solutions in weaker norm, say $L^1$-norm, is already established (see Corollary \ref{cor:CB-system} for an application to chemical reaction networks). Note finally that similar ideas were also used in \cite{GH97,PSY17,SY15} to obtain global solutions for systems with quadratic nonlinearities in two dimensions.

\medskip
The main result of this paper is the following.
\begin{theorem}\label{theo:main}
	Let $\Omega\subset \mathbb R^d$ with $d=1,2$ be a bounded domain with smooth boundary $\partial\Omega$ in case $d = 2$ (e.g. $\partial\Omega$ is of class $ C^{2+\epsilon}$ with $\epsilon>0$).
	Assume the diffusion coefficients are positive, i.e. $d_i>0$ for all $i=1,\ldots, N$, and the nonlinearities are locally Lipschitz and satisfy \eqref{positive-preserve}, \eqref{entropy-inequality} and \eqref{cubic-growth}. Then for any initial data $(u_{i,0})\in L^{\infty}(\Omega)^N$, system \eqref{cubic-system} has a unique classical solution on $(0,\infty)$. Moreover, the $L^{\infty}$-norm of this solution grows at most polynomially in time, i.e. for all $T>0$
	\begin{equation*}
	\|u_i(t)\|_{L^{\infty}(\Omega)} \leq C_T \quad \text{ for all } \quad 0<t\leq T \text{ and all } i=1,\ldots, N,
	\end{equation*}
	in which $C_T$ is a constant depends at most polynomially w.r.t. $T$.
\end{theorem}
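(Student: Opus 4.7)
The plan is to combine an entropy dissipation estimate (from \eqref{entropy-inequality}), a modified Gagliardo--Nirenberg inequality, and maximal $L^p$-regularity for the heat operator, while tracking all constants polynomially in $T$. Since $f=(f_i)$ is locally Lipschitz and the initial data lie in $L^\infty(\Omega)$, standard parabolic theory yields a unique maximal classical solution $u$ on some interval $[0,T_{\max})$, and the quasi-positivity condition \eqref{positive-preserve} combined with the componentwise parabolic maximum principle gives $u_i\ge 0$. Global existence in the classical sense will follow from the blow-up alternative once we establish an \emph{a priori} bound $\|u_i(t)\|_{L^\infty(\Omega)}\le C(1+t)^{\kappa}$ for some $\kappa\ge 0$.

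Multiplying the $i$-th equation by $\mu_i+\log u_i$ (after the standard regularization $u_i\mapsto u_i+\varepsilon$ and passing to the limit using positivity), summing in $i$ and using \eqref{entropy-inequality}, I would obtain the entropy-dissipation identity
\begin{equation*}
\frac{d}{dt}\sum_i\int_\Omega u_i(\log u_i-1+\mu_i)\,dx+4\sum_i d_i\int_\Omega|\nabla\sqrt{u_i}|^2\,dx\le 0.
\end{equation*}
Since $x(\log x-1+\mu)\ge-e^{-\mu}$ for $x\ge 0$, the entropy is bounded below, and integrating in time yields the $T$-independent bounds $\sup_{t\le T}\|u_i(t)\|_{L\log L(\Omega)}\le C$ and $\int_0^T\|\nabla\sqrt{u_i}\|_{L^2(\Omega)}^2\,dt\le C$; the $L\log L$-bound also gives $\sup_{t\le T}\|u_i(t)\|_{L^1(\Omega)}\le C$ by a de la Vall\'ee Poussin argument.

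The core of the proof is to promote these into $L^p(Q_T)$-integrability above the heat-regularity threshold $\mu(d+2)/2$ (equal to $9/2$ in 1D and $4$ in 2D). In dimension one, the embedding $H^1(\Omega)\hookrightarrow L^\infty(\Omega)$ applied to $\sqrt{u_i}$ yields $\|u_i(t)\|_{L^\infty(\Omega)}\le C\|u_i\|_{L^1}^{1/2}\|\nabla\sqrt{u_i}\|_{L^2}+C\|u_i\|_{L^1}$, hence $\|u_i\|_{L^2(0,T;L^\infty(\Omega))}\le C(1+T)^{1/2}$; interpolating with $L^\infty(0,T;L^1(\Omega))$ gives $u_i\in L^3(Q_T)$ and, via further Moser-type iteration combined with a modified Gagliardo--Nirenberg inequality using the $L\log L$-bound, $u_i\in L^p(Q_T)$ for every $p<\infty$, with polynomial-in-$T$ norms. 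In dimension two, standard Gagliardo--Nirenberg applied to $\sqrt{u_i}$ already yields $u_i\in L^2(Q_T)$ with a $T$-independent norm, but to pass above the critical exponent one needs the modified Gagliardo--Nirenberg inequality exploiting the $L\log L$-enhancement (combined with $L^p$-energy estimates and Young's inequality for absorption), again giving $u_i\in L^p(Q_T)$ for every $p<\infty$. Then by \eqref{cubic-growth}, $f_i(u)\in L^{p/\mu}(Q_T)$, and choosing $p$ with $p/\mu>(d+2)/2$, maximal $L^q$-parabolic regularity applied to $\partial_t u_i-d_i\Delta u_i=f_i(u)$ together with the embedding $W^{2,1;q}(Q_T)\hookrightarrow L^\infty(Q_T)$ (valid for $q>(d+2)/2$) delivers the desired polynomial-in-$T$ $L^\infty$-bound on $u_i$.

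The principal obstacle is the two-dimensional case: a naive $L^p$-iteration does not close because the quadratic nonlinearity is exactly at the critical $L^2$-level, and the logarithmic gain coming from \eqref{entropy-inequality} through the modified Gagliardo--Nirenberg inequality is essential to push the integrability strictly above the heat-regularity threshold. In 1D the Sobolev embedding $H^1\hookrightarrow L^\infty$ is more generous, but the cubic growth still raises the threshold to $p>9/2$, so the iteration must proceed through several stages. A further subtlety is to keep all constants polynomial (and not exponential, as a careless Gr\"onwall would produce) in $T$; this relies on the $T$-independence of the Fisher-information bound and on choosing the intermediate exponents so that no constant is ever raised to a power growing with $T$.
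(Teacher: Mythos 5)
Your plan is essentially the paper's proof: entropy dissipation from \eqref{entropy-inequality} yields uniform $L^1$ and $L\log L$ bounds (Lemma \ref{entropy-estimates}), the modified Gagliardo--Nirenberg inequality of Lemma \ref{modified} converts the $L\log L$ control into an absorbable coefficient in the $L^2$-energy estimate (Lemma \ref{L3}), and the polynomial-in-$T$ heat regularity (Lemma \ref{heat-regularity}, taken from Ca\~nizo--Desvillettes--Fellner) finishes the job. Two organizational remarks. First, your 1D intermediate step via the Fisher information, $\|\nabla\sqrt{u_i}\|_{L^2(Q_T)}^2\leq C$ plus $H^1\hookrightarrow L^\infty$ applied to $\sqrt{u_i}$, only delivers $u_i\in L^3(Q_T)$, which is exactly critical for cubic nonlinearities: it gives $f_i(u)\in L^1(Q_T)$, and Lemma \ref{heat-regularity}(i) with $p=1$, $d=1$ returns $u_i\in L^s(Q_T)$ only for $s<3$, i.e.\ no gain; so this step can be discarded, and the real work must start from the $L^2$-energy estimate with the modified Gagliardo--Nirenberg absorption, which yields $u_i\in L^\infty(0,T;L^2)\cap L^2(0,T;L^\infty)\hookrightarrow L^4(Q_T)$ in 1D (and $L^{4-\epsilon}(Q_T)$ in 2D). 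Second, where you propose a nonlinear Moser-type iteration to reach $L^p(Q_T)$ for every $p<\infty$ before invoking maximal regularity, the paper instead stops the energy method at $p=2$ and performs the entire lift from $L^4(Q_T)$ (resp.\ $L^{4-\epsilon}(Q_T)$) to $L^\infty$ by iterating the \emph{linear} heat-regularity lemma two or three times ($L^{4/3}\to L^{s<12}\to L^{s'<4}\to$ all $r<\infty\to L^\infty$ in 1D); this is simpler to justify rigorously and makes the polynomial-in-$T$ bookkeeping automatic, whereas a higher-$p$ nonlinear iteration requires checking at each stage that the Gagliardo--Nirenberg exponent on the $H^1$-norm stays at or below $2$ and that no Gr\"onwall constant exponentiates in $T$. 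Neither point is a fatal gap, but as written your bootstrap is the part that still needs to be carried out in detail.
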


\begin{remark}
	The entropy condition \eqref{entropy-inequality} can in fact be weakened as
	\begin{equation*}\tag{$\mathbf E'$}
		\sum_{i=1}^{N}f_i(u)(\mu_i + \log u_i) \leq K_1\sum_{i=1}^{N}u_i + K_2 \quad \text{ for all } \quad u\in \mathbb R_+^N,
	\end{equation*}
	with positive constants $K_1, K_2>0$.
	See e.g. \cite{PSY17} for more details.
\end{remark}
\begin{remark}
	The $L^{\infty}(\Omega)$ initial data, which is chosen for the sake of simplicity, is certainly not optimal. For example, with more careful analysis, the initial data can be relaxed to $L^p(\Omega)$ for some $p>2$ (when one only wishes to have the bounds of solution after some time $t_0>0$).
\end{remark}

The rest of this paper is organised as follows: In the next section, we give the proof of Theorem \ref{theo:main}, and then its application to complex balanced systems is described in Section \ref{applications}.

\medskip
{\bf Notations}: For simplicity, we will use the following set of notations:
\begin{itemize}
	\item The norm in $L^{p}(\Omega)$ with $1\leq p \leq \infty$ is denoted by $\|\cdot\|_{L^p}$.
	\item For any $T>0$ we write $Q_T = \Omega\times (0,T)$ and $\|\cdot\|_{L^p(Q_T)}:= \|\cdot\|_{L^p(0,T;L^p(\Omega))}$ for $p\geq 1$.
	\item We will denote by $C_i$ a varying constant (possibly) depending on the domain $\Omega$, the diffusion coefficients, etc., but {\it independent of time $t$}.
\end{itemize}
\section{Proof of main results}
The local existence of classical solutions to \eqref{cubic-system} with locally Lipschitz nonlinearities is standard (see e.g. \cite{Ama85,Rot87}). Moreover, thanks to the positivity preserving property \eqref{positive-preserve} of the nonlinearities, the solution is positive as long as it exists, see e.g. \cite{Pao}.

To prove that the classical solution is global, we first show in Lemma \ref{entropy-estimates} a uniform-in-time bound for solution in $L^1$- and $L\log L$-norms. These bounds, in a combination with a modified Gagliardo-Nirenberg inequality in Lemma \ref{modified}, lead then to some $L^p$-integrability of solutions in Lemma \ref{L3} for suitable $p$. Finally this integrability, with the help from smoothing effect of the heat operator in Lemma \ref{heat-regularity}, implies the bound of solutions in $L^{\infty}$.

\begin{lemma}[Entropy estimate]\label{entropy-estimates}
	Assume that \eqref{entropy-inequality} holds. Then we have the following a priori estimate for any classical solution to \eqref{cubic-system},
	\begin{equation*}
	\sup_{t\geq 0} \|u_i(t)\|_{L^1} \leq C_0, \qquad \sup_{t\ge 0}\|u_i(t)\log u_i(t)\|_{L^1} \leq C_0
	\end{equation*}
	here $C_0$ is a constant independent of $t$.
\end{lemma}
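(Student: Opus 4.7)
The plan is to control the standard Boltzmann-type entropy functional
\[
E(u(t)) := \sum_{i=1}^{N}\int_{\Omega}\bigl(u_i\log u_i - u_i + \mu_i u_i\bigr)\,dx,
\]
show it is nonincreasing in time, and then extract both the $L^{1}$ and $L\log L$ bounds from it.

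First I would differentiate $E(u(t))$ along classical solutions. Using the equation for $u_i$, we get
\[
\frac{d}{dt}E(u(t)) = \sum_{i=1}^{N}\int_{\Omega}(\mu_i+\log u_i)\bigl(d_i\Delta u_i + f_i(u)\bigr)\,dx.
\]
For each diffusion term, integration by parts with the homogeneous Neumann boundary condition gives
\[
d_i\int_{\Omega}(\mu_i+\log u_i)\Delta u_i\,dx = -d_i\int_{\Omega}\frac{|\nabla u_i|^{2}}{u_i}\,dx \le 0,
\]
which is well-defined because the classical solution is strictly positive (or one can approximate by $\log(u_i+\varepsilon)$ and pass to the limit if needed). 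The reaction term is $\le 0$ directly by the entropy inequality \eqref{entropy-inequality}. Hence $\frac{d}{dt}E(u(t))\le 0$, so $E(u(t))\le E(u(0))=:M$, and since $u_{0}\in L^{\infty}(\Omega)^{N}$, the constant $M$ is finite and depends only on the initial data.

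Next I convert this into the two stated bounds. The key convex-duality inequality is $u\log u \ge (a+1)u - e^{a}$, valid for all $u\ge0$ and all $a\in\mathbb{R}$ (it is the Legendre-Fenchel inequality applied to $\varphi(u)=u\log u-u+1$ and $\varphi^{*}(y)=e^{y}-1$). Choosing $a$ with $a+\mu_i>0$ for every $i$ (e.g.\ $a>\max_{i}(-\mu_i)$), I obtain
\[
u_i\log u_i - u_i + \mu_i u_i \ge (a+\mu_i)u_i - e^{a},
\]
whence summing, integrating, and using $E(u(t))\le M$ yields
\[
\Bigl(\min_{i}(a+\mu_i)\Bigr)\sum_{i=1}^{N}\|u_i(t)\|_{L^{1}} \le M + N e^{a}|\Omega|,
\]
which is the uniform $L^{1}$ estimate.

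Finally, the $L\log L$ bound follows from the elementary inequality $x\log x\ge -e^{-1}$, which bounds the negative part of $u_i\log u_i$ pointwise by $e^{-1}$. Writing $|u_i\log u_i|=u_i\log u_i+2(u_i\log u_i)_{-}$ and integrating, I get
\[
\|u_i\log u_i\|_{L^{1}}\le \int_{\Omega}u_i\log u_i\,dx + 2e^{-1}|\Omega|,
\]
and the first term on the right is controlled by $M+\sum_{j}(1-\mu_j)\|u_j\|_{L^{1}}$, which has just been shown to be uniformly bounded. This completes the proof. The only delicate step is justifying the integration by parts for $\log u_i$ at places where $u_i$ could be small; this is the standard regularization $\log(u_i+\varepsilon)$ argument and is the one point requiring a little care.
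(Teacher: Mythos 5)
Your proposal is correct and follows essentially the same route as the paper: both differentiate the entropy functional $\sum_i\int_\Omega(u_i\log u_i-u_i+\mu_iu_i)\,dx$, use integration by parts plus \eqref{entropy-inequality} to show it is nonincreasing, and then extract the $L^1$ bound via the elementary convex-duality inequality $x\log x\ge (L+1)x-e^{L}$ (the paper's $x\log x - x + 1 \ge Lx - e^L + 1$ with a suitably large $L$ is the same inequality) and the $L\log L$ bound by controlling the negative part of $x\log x$ by $e^{-1}$. Your remark on justifying the integration by parts near the zero set of $u_i$ is a point of rigor the paper passes over silently, but it does not change the argument.
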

\begin{proof}
	From the entropy inequality \eqref{entropy-inequality} we have
	\begin{equation}\label{e0}
	\begin{aligned}
	\frac{d}{dt}&\int_{\Omega}\sum_{i=1}^{N}(u_i(\mu_i + \log u_i) - u_i)dx\\
	&= \int_{\Omega}\sum_{i=1}^{N}(\mu_i + \log u_i) \partial_t u_i dx\\
	&= -\int_{\Omega}\sum_{i=1}^{N}d_i\frac{|\nabla u_i|^2}{u_i}dx +\int_{\Omega} \sum_{r=1}^{N}(\mu_i + \log u_i)f_i(u)dx\\
	&\leq -\int_{\Omega}\sum_{i=1}^{N}d_i\frac{|\nabla u_i|^2}{u_i}dx \leq 0.
	\end{aligned}
	\end{equation}
	Hence,
	\begin{equation*}
	\int_{\Omega}\sum_{i=1}^{N}(u_i(t)\log u_i(t) - u_i(t) + \mu_iu_i(t))dx \leq \int_{\Omega}\sum_{i=1}^{N}(u_{i,0}\log u_{i,0} - u_{i,0} + \mu_iu_{i,0})dx.
	\end{equation*}
	Denote by $C_1$ the right hand side, we rewrite this inequality as
	\begin{equation}\label{e1}
	\begin{aligned}
	\int_{\Omega}\sum_{i=1}^{N}(u_i(t)\log u_i(t) - u_i(t) + 1)dx &\leq C_1 + N|\Omega| - \int_{\Omega}\sum_{i=1}^{N}\mu_iu_i(t)dx\\
	&\leq C_1 + N|\Omega| + \max_{i=1,\ldots, N}|\mu_i|\int_{\Omega}\sum_{i=1}^{N}u_i(t)dx.
	\end{aligned}
	\end{equation}
	Using the inequality $x\log x - x + 1 \geq Lx - e^L + 1$ for all $L>0$ we have 
	\begin{equation*}
	\int_{\Omega}\sum_{i=1}^{N}(u_i(t)\log u_i(t) - u_i(t) + 1)dx \geq K\int_{\Omega}\sum_{i=1}^{N}u_i(t)dx - e^KN|\Omega| + N|\Omega|
	\end{equation*}
	with  $K = 2\max_{i=1,\ldots, N}|\mu_i|$. Therefore we obtain the estimate
	\begin{equation*}
	\int_{\Omega}\sum_{i=1}^{N}u_i(t)dx \leq \frac{1}{K}(C_1 + e^KN|\Omega|)
	\end{equation*}
	which, together with the positivity of the solution, leads to the uniform in tim $L^1$-bound. The bound of $\|u_i(t)\log u_i(t)\|_{L^1}$ follows immediately from \eqref{e1} and $x\log x \geq x - 1$ for all $x\geq 0$.
\end{proof}
\begin{lemma}[A modified Gagliardo-Nirenberg inequality]\label{modified}
	For any $\varepsilon>0$ there exists $C_{\varepsilon}>0$ such that for all $f\in H^1(\Omega)$
	\begin{equation*}
	\|f\|_{L^4}^4 \leq \varepsilon\|f\|_{H^1(\Omega)}^2\|f\log |f|\|_{L^1}^2 + C_{\varepsilon}\|f\|_{L^1} \quad \text{ when } \quad d = 1,
	\end{equation*}
	and
	\begin{equation*}
	\|f\|_{L^3}^3 \leq \varepsilon\|f\|_{H^1(\Omega)}^2\|f\log |f|\|_{L^1} + C_{\varepsilon}\|f\|_{L^1} \quad \text{ when } \quad  d= 2.
	\end{equation*}		
\end{lemma}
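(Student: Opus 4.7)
The plan is to reduce to the classical Gagliardo--Nirenberg inequality by means of a Lipschitz truncation that isolates the large values of $f$, exploiting the trivial but crucial observation that on the set $\{|f|>K\}$ with $K>e$ the logarithm is positive, so $|f|\leq |f|\log|f|/\log K$. This lets us replace an $\|f\|_{L^1}$-factor appearing in the classical inequalities $\|g\|_{L^4}^4\leq C\|g\|_{H^1}^2\|g\|_{L^1}^2$ (in dimension one) and $\|g\|_{L^3}^3\leq C\|g\|_{H^1}^2\|g\|_{L^1}$ (in dimension two) by a factor of $\|f\log|f|\|_{L^1}$ with an arbitrarily small prefactor, at the cost of a large lower-order term controlled by $\|f\|_{L^1}$ alone.

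Concretely, fix $K>e$ and introduce $\phi_K(s):=\mathrm{sgn}(s)(|s|-K)_+$, which vanishes for $|s|\leq K$ and satisfies $|\phi_K(s)|\leq |s|$ together with $|\phi_K'(s)|\leq 1$ a.e. By the Stampacchia chain rule, $\phi_K(f)\in H^1(\Omega)$ with $\|\phi_K(f)\|_{H^1}\leq \|f\|_{H^1}$. Decompose $\Omega$ into the three regions $\{|f|\leq K\}$, $\{K<|f|\leq 2K\}$ and $\{|f|>2K\}$. Using $|f|^p\leq K^{p-1}|f|$ on the first, $|f|^p\leq (2K)^{p-1}|f|$ on the second, and $|f|\leq 2|\phi_K(f)|$ on the third (since $|\phi_K(f)|=|f|-K\geq |f|/2$ when $|f|\geq 2K$), I obtain, for $p\in\{3,4\}$,
\[
\|f\|_{L^p}^p \leq C_p\,K^{p-1}\|f\|_{L^1} + 2^p\,\|\phi_K(f)\|_{L^p}^p.
\]

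Next, I apply the classical Gagliardo--Nirenberg inequality to $g=\phi_K(f)$ and combine it with the elementary bound
\[
\|\phi_K(f)\|_{L^1}\leq \int_{\{|f|>K\}}|f|\,dx \leq \frac{1}{\log K}\int_{\{|f|>K\}}|f|\log|f|\,dx \leq \frac{1}{\log K}\,\|f\log|f|\|_{L^1},
\]
valid because $\log|f|\geq \log K>0$ on $\{|f|>K\}$. In dimension one this yields
\[
\|f\|_{L^4}^4 \leq C\,K^3\|f\|_{L^1} + \frac{C}{(\log K)^2}\,\|f\|_{H^1}^2\,\|f\log|f|\|_{L^1}^2,
\]
and in dimension two
\[
\|f\|_{L^3}^3 \leq C\,K^2\|f\|_{L^1} + \frac{C}{\log K}\,\|f\|_{H^1}^2\,\|f\log|f|\|_{L^1}.
\]
It remains to choose $K=K(\varepsilon)$ large enough that the coefficient $C/(\log K)^2$ (resp.\ $C/\log K$) is at most $\varepsilon$, and set $C_\varepsilon:=C\,K^{p-1}$.

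The only delicate point in the plan is the truncation step---verifying that $\phi_K(f)\in H^1(\Omega)$ with $\nabla\phi_K(f)=\phi_K'(f)\nabla f$ a.e.---but this is a routine application of the chain rule for Sobolev functions; everything else is a careful matching of exponents with the classical Gagliardo--Nirenberg scaling in dimensions one and two, and a final one-parameter optimization in $K$.
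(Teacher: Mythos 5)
Your proof is correct and follows essentially the same route as the paper: truncate $f$ at a large level $K$, apply the classical Gagliardo--Nirenberg inequality to the truncated part, bound its $L^1$-norm by $\|f\log|f|\|_{L^1}/\log K$, control the remainder by $K^{p-1}\|f\|_{L^1}$, and choose $K$ large. The only cosmetic difference is your choice of cut-off $\phi_K(s)=\mathrm{sgn}(s)(|s|-K)_+$ versus the paper's piecewise-linear $\chi$ with slope $2$ on $N<|s|\le 2N$, which changes nothing of substance.
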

\begin{proof}
	The proof follows from the ideas in \cite{BWT94} where the authors obtained a similar version (with estimates for $L^3$-norm) in two dimensions. We only give a proof in case $d=1$ since the proof in case $d=2$ is similar.
	
	Fix a constant $N>1$. Define a function $\chi: \mathbb R \to \mathbb R$ as $\chi(s) = 0$ if $|s| \leq N$, $\chi(s) = 2(|s| - N)$ when $N<|s| \leq 2N$ and $\chi(s) = |s|$ when $|s| > 2N$. In this proof we use
	\[
	\Omega\{|f| \geq N\}:= \{x\in\Omega: |f(x)| \geq N\}.
	\]
	First we write
	\begin{equation}\label{e2}
	\|f\|_{L^4}^4 \leq \left(\|\chi(f)\|_{L^4} + \||f| - \chi(f)\|_{L^4}\right)^4 \leq 8(\|\chi(f)\|_{L^4}^4 + \||f| - \chi(f)\|_{L^4}^4)
	\end{equation}
	and then estimate each term separately. It is easy to see that
	\begin{equation}\label{e3}
	\||f| - \chi(f)\|_{L^4}^4 = \int_{\Omega}||f| - \chi(f)|^4dx \leq (2N)^3\int_{\Omega\{|f|\leq 2N\}}|f|dx \leq (2N)^3\|f\|_{L^1}.
	\end{equation}
	Concerning the other term, we use the usual Gagliardo-Nirenberg inequality
	\begin{equation}\label{e4}
	\|\chi(f)\|_{L^4}^4 \leq C_4\|\chi(f)\|_{H^1(\Omega)}^2\|\chi(f)\|_{L^1}^2
	\end{equation}
	for some constant $C_4>0$. On the one hand
	\begin{equation}\label{e5}
	\|\chi(f)\|_{H^1(\Omega)}^2 = \|\chi'(f)\nabla f\|_{L^2}^2 + \|\chi(f)\|_{L^2}^2 \leq 4\|f\|_{H^1(\Omega)}^2,
	\end{equation}
	and on the other hand
	\begin{equation}\label{e6}
	\|\chi(f)\|_{L^1}^2 \leq \left(\int_{\Omega\{|f|\geq N\}}|f|dx \right)^2 \leq  (\log N)^{-2}\|f\log|f|\|_{L^1}^2.
	\end{equation}
	By combining \eqref{e2}--\eqref{e6} we obtain
	\begin{equation}
	\|f\|_{L^4}^4 \leq 16C_4(\log N)^{-2}\|f\|_{H^1(\Omega)}^2\|f\log|f|\|_{L^1}^2 + 4(2N)^3\|f\|_{L^1}.
	\end{equation}
	At this point we can choose $N$ to be large enough to obtain the desired inequality.
\end{proof}
\begin{lemma}\label{L3}
	Assume that \eqref{positive-preserve}, \eqref{entropy-inequality} and \eqref{cubic-growth} hold. Then for any $T>0$, we have for all $i=1,\ldots, N$
	\begin{equation}\label{L4}
	\|u_i\|_{L^4(Q_T)} \leq C_T \text{ when } d=1,
	\end{equation}
	and
	\begin{equation}\label{L4epsilon}
	\|u_i\|_{L^{4-\epsilon}(Q_T)} \leq C_T \text{ when } d=2,
	\end{equation}	
	with $\epsilon>0$ arbitrary, where $C_T$ is a constant grows at most polynomially w.r.t. $T$.
\end{lemma}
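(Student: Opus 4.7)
The plan is to close a nonlinear energy estimate for $u_i$ by combining three ingredients: the uniform $L^1$ and $L\log L$ bounds from Lemma~\ref{entropy-estimates}, the modified Gagliardo--Nirenberg inequality from Lemma~\ref{modified}, and the critical growth condition~\eqref{cubic-growth}. Since the solution is nonnegative by \eqref{positive-preserve}, we may freely identify $u_i$ and $|u_i|$. The first step is to apply Lemma~\ref{modified} pointwise in $t$ with $f=u_i(t)$, using the entropy bounds $\|u_i(t)\|_{L^1}, \|u_i(t)\log u_i(t)\|_{L^1} \leq C_0$ to absorb the nonlinear factors; this yields
$$ \|u_i(t)\|_{L^4(\Omega)}^4 \leq \varepsilon\,C_0^2\,\|u_i(t)\|_{H^1(\Omega)}^2 + C_\varepsilon C_0 \quad \text{in 1D}, $$
and analogously $\|u_i(t)\|_{L^3(\Omega)}^3 \leq \varepsilon\,C_0\,\|u_i(t)\|_{H^1(\Omega)}^2 + C_\varepsilon C_0$ in 2D. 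After time-integration, the claim reduces to a polynomial-in-$T$ bound on $\int_0^T \|u_i(t)\|_{H^1(\Omega)}^2\,dt$.

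Such a bound I would derive from the standard energy identity produced by testing \eqref{cubic-system} with $u_i$ and integrating over $Q_T$:
$$ \tfrac12\|u_i(T)\|_{L^2}^2 + d_i\int_0^T\|\nabla u_i(t)\|_{L^2}^2\,dt = \tfrac12\|u_{i,0}\|_{L^2}^2 + \int_0^T\!\!\!\int_\Omega f_i(u)\,u_i\,dx\,dt. $$
The growth condition \eqref{cubic-growth} together with $u_i\leq |u|$ gives $|f_i(u)u_i|\leq K(|u|^{\mu+1}+|u|)$, which in 1D is bounded by $K(\|u\|_{L^4}^4+\|u\|_{L^1})$ and in 2D by $K(\|u\|_{L^3}^3+\|u\|_{L^1})$---exactly the quantities controlled in the first step after substitution. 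The missing $L^2(Q_T)$-piece of the $H^1$-norm is handled by the interpolation $\|u\|_{L^2}^2 \lesssim \|u\|_{L^4}^{4/3}\|u\|_{L^1}^{2/3}$ in 1D (resp.\ $\|u\|_{L^2}^2 \lesssim \|u\|_{L^3}^{3/2}\|u\|_{L^1}^{1/2}$ in 2D) combined with Young's inequality, contributing only a $\delta \int_0^T\|u\|_{L^p}^p\,dt + C_\delta T$ term.

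Setting $X(T):=\sum_i\int_0^T\|u_i(t)\|_{L^p(\Omega)}^p\,dt$ with $p=4$ in 1D and $p=3$ in 2D, the two steps combine into an inequality of the form $X(T) \leq \bigl[\varepsilon C_0^2(K'+\delta)\bigr] X(T) + C_{\varepsilon,\delta}\,T$. Choosing $\varepsilon$ and $\delta$ small enough that the bracketed coefficient is strictly less than $1$, the $X(T)$ on the right-hand side is absorbed and we obtain $X(T)\leq C\,T$, which is exactly \eqref{L4}. To upgrade the 2D $L^3$-bound to \eqref{L4epsilon}, I would re-insert $\int_0^T\|u\|_{L^3}^3\,dt\leq CT$ into the energy identity to get $\|u_i\|_{L^\infty(0,T;L^2)}^2 + \|u_i\|_{L^2(0,T;H^1)}^2 \leq CT$; the two-dimensional Ladyzhenskaya inequality $\|v\|_{L^4(Q_T)}^4 \leq C\|v\|_{L^\infty(0,T;L^2)}^2\|v\|_{L^2(0,T;H^1)}^2$ then upgrades this to a polynomial-in-$T$ $L^4(Q_T)$-bound, in particular yielding \eqref{L4epsilon}.

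The main obstacle is the apparent circularity of the scheme: the modified Gagliardo--Nirenberg produces an $H^1$-norm which, through the energy identity, feeds back into the very $L^p$-norm we seek to control. That this loop closes at all is a delicate matching of exponents, and is precisely why the growth condition \eqref{cubic-growth} is imposed with the critical rates $\mu=3$ in 1D and $\mu=2$ in 2D---any larger $\mu$ would destroy the absorption step by producing a coefficient in front of $X(T)$ that cannot be made strictly less than one by shrinking $\varepsilon$.
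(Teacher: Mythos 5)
Your proof is correct and rests on the same two pillars as the paper's argument: the energy identity obtained by testing the $i$-th equation with $u_i$, and Lemma \ref{modified} combined with the uniform $L^1$ and $L\log L$ bounds of Lemma \ref{entropy-estimates} to place a small constant $\varepsilon$ in front of the $H^1$-norm. The difference is only in how the absorption is organised. The paper works at the level of the differential inequality: after choosing $\varepsilon$ small it absorbs $\varepsilon\|u_i\|_{H^1(\Omega)}^2$ into the dissipation $d_i\|\nabla u_i\|_{L^2}^2$, deduces $u_i\in L^{\infty}(0,T;L^2(\Omega))\cap L^2(0,T;L^{\infty}(\Omega))$ in 1D (resp.\ $L^2(0,T;L^p(\Omega))$ in 2D) and concludes by parabolic interpolation. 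You instead integrate in time first and close a self-consistent inequality $X(T)\leq \theta X(T)+CT$ with $\theta<1$ for $X(T)=\sum_i\int_0^T\|u_i\|_{L^4(\Omega)}^4\,dt$ (resp.\ the $L^3$ analogue); this is legitimate, but it silently uses that $X(T)<\infty$ a priori, which holds because the classical solution is bounded on $\overline{Q_T}$ for $T$ strictly below the maximal existence time --- worth stating explicitly, since this lemma is precisely the a priori estimate later used to rule out blow-up. Your 2D endgame via the embedding $L^{\infty}(0,T;L^2(\Omega))\cap L^2(0,T;H^1(\Omega))\hookrightarrow L^4(Q_T)$ actually yields the endpoint $L^4(Q_T)$ bound, slightly stronger than the $L^{4-\epsilon}(Q_T)$ estimate the paper obtains by letting $p\to\infty$ in $H^1(\Omega)\hookrightarrow L^p(\Omega)$; either version suffices for the subsequent bootstrap with Lemma \ref{heat-regularity}.
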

\begin{proof}
	Recall that $C_i$ denotes a various constant depending on the domain $\Omega$, the diffusion coefficients, the constant $K$ in \eqref{cubic-growth}, and constant $C_0$ in Lemma \ref{entropy-estimates}, but {\it independent} of time $t$. Note that all constants $C_i$ can be explicitly computed.
	
	We first prove \eqref{L4}. Multiplying the equation
	\begin{equation*}
	\partial_t u_i - d_i\Delta u_i = f_i(u)
	\end{equation*}
	by $u_i$ in $L^2(\Omega)$ we have
	\begin{equation}\label{e7}
	\begin{aligned}
	\frac 12 \frac{d}{dt}\|u_i\|_{L^2}^2 + d_i\|\nabla u_i\|_{L^2}^2 &= \int_{\Omega}f_i(u)u_idx\\
	&\leq K\int_{\Omega}(|u|^3 + 1)|u_i|dx\\
	&\leq 3K\int_{\Omega}\sum_{j=1}^{N}|u_i||u_j|^3dx + KC_0\quad (\text{by Lemma \ref{entropy-estimates}})\\
	&\leq 3K\sum_{j=1}^{N}(\|u_j\|_{L^4}^4 + \|u_i\|_{L^4}^4) + KC_0 \quad (\text{H\"older's inequality})\\
	&\leq 3NK\sum_{j=1}^{N}\|u_j\|_{L^4}^4+ KC_0.
	\end{aligned}
	\end{equation}
	Hence, by summing over $i=1,\ldots, N$ and using Lemma \ref{modified}, it follows that
	\begin{equation*}
	\begin{aligned}
	\frac{d}{dt}&\sum_{i=1}^{N}\|u_i\|_{L^2}^2 + 2\sum_{i=1}^{N}d_i\|\nabla u_i\|_{L^2}^2\\
	& \leq 3N^2K\sum_{i=1}^{N}\|u_i\|_{L^4}^4 + KNC_0\\
	& \leq 3N^2K\sum_{i=1}^{N}\left(\varepsilon\|u_i\|_{H^1(\Omega)}^2 \|u_i\log u_i\|_{L^1}^2  + C_{\varepsilon}\|u_i\|_{L^1}\right)\\
	&\leq 3\varepsilon N^3KC_0^2\sum_{i=1}^{N}\|u_i\|_{H^1(\Omega)}^2 + 3N^3KC_{\varepsilon}C_0\quad (\text{by Lemma \ref{entropy-estimates}}).
	\end{aligned}
	\end{equation*}
	Choosing $\varepsilon$ small enough we get
	\begin{equation*}
	\frac{d}{dt}\sum_{i=1}^{N}\|u_i\|_{L^2}^2 + 2\sum_{i=1}^{N}d_i\|\nabla u_i\|_{L^2}^2 \leq \sum_{i=1}^{N}d_i\|u_i\|_{H^1(\Omega)}^2 + C_6
	\end{equation*}
	for some $C_6>0$. By adding $2\sum_{i=1}^{N}d_i\|u_i\|_{L^2}^2$ to both sides and using the one dimensional embedding inequality $\|u_i\|_{L^{\infty}}^2 \leq C_7\|u_i\|_{H^1(\Omega)}^2$, it follows that
	\begin{equation*}
	\begin{aligned}
	\frac{d}{dt}\sum_{i=1}^{N}\|u_i\|_{L^2}^2 + C_7\sum_{i=1}^{N}d_i\|u_i\|_{L^{\infty}}^2 &\leq C_6 + 2\sum_{i=1}^{N}d_i\|u_i\|_{L^2}^2\\
	&\leq C_6 + 2\sum_{i=1}^{N}d_i\|u_i\|_{L^1}\|u_i\|_{L^{\infty}} \quad (\text{interpolation})\\
	&\leq C_6 + C_8\sum_{i=1}^{N}\|u_i\|_{L^1}^2 + \frac{C_7}{2}\sum_{i=1}^{N}d_i\|u_i\|_{L^\infty}^2\\
	&\leq C_9 +\frac{C_7}{2}\sum_{i=1}^{N}d_i\|u_i\|_{L^\infty}^2 \quad (\text{by } \|u_i\|_{L^1} \leq C_0).
	\end{aligned}
	\end{equation*}
	Thus, using one more $\|u_i\|_{L^{\infty}}^2 \geq \frac{1}{|\Omega|}\|u_i\|_{L^2}^2$, it leads to
	\begin{equation*}
	\frac{d}{dt}\sum_{i=1}^{N}\|u_i\|_{L^2}^2 + C_{10}\sum_{i=1}^{N}\|u_i\|_{L^2}^2 + C_{11}\sum_{i=1}^{N}\|u_i\|_{L^{\infty}}^2\leq C_9
	\end{equation*}
	and consequently an integration on $(0,T)$ gives
	\begin{equation*}
	\sum_{i=1}^{N}\|u_i(t)\|_{L^2}^2 + C_{11}\sum_{i=1}^{N}\int_{0}^{T}\|u_i(\tau)\|_{L^\infty}^2d\tau \leq \sum_{i=1}^{N}\|u_{i,0}\|_{L^2}^2 + \frac{C_9}{C_{10}}T\quad \text{ for all } \quad t>0.
	\end{equation*}
	From this we have $u_i\in L^{\infty}(0,T;L^2(\Omega))$ and $u_i\in L^2(0,T;L^{\infty}(\Omega))$ for all $i=1,\ldots, N$. Finally, by an interpolation 
	\begin{equation*}
	L^{\infty}(0,T;L^2(\Omega))\cap L^2(0,T;L^{\infty}(\Omega))\hookrightarrow L^4(Q_T)
	\end{equation*}
	we obtain \eqref{L4}.
	
	\medskip	
	{Consider now the case $d=2$. With computations similar to \eqref{e7} we get
		\begin{equation*}
		\frac{d}{dt}\sum_{i=1}^{N}\|u_i\|_{L^2}^2 + 2\sum_{i=1}^{N}d_i\|\nabla u_i\|_{L^2}^2 \leq C_{12}\sum_{i=1}^{N}\|u_i\|_{L^3}^3 + C_{13}.
		\end{equation*}
		Applying Lemma \ref{modified} with $d=2$ to $\|u_i\|_{L^3}^3$, and using the bounds $\sup\limits_{t\geq 0}\|u_i(t)\|_{L^1}\leq C_0$ and $\sup\limits_{t\geq 0}\|u_i(t)\log u_i(t)\|_{L^1}\leq C_0$ in Lemma \ref{entropy-estimates} lead to
		\begin{equation*}
		\frac{d}{dt}\sum_{i=1}^{N}\|u_i\|_{L^2}^2 + 2\sum_{i=1}^{N}d_i\|\nabla u_i\|_{L^2}^2 \leq \sum_{i=1}^{N}d_i\|u_i\|_{H^1(\Omega)}^2 + C_{14}.
		\end{equation*}
		By adding both sides with $\sum_{i=1}^{N}d_i\|u_i\|_{L^2}^2$ and using the two-dimensional embedding $H^1(\Omega)\hookrightarrow L^p(\Omega)$ for any $2\leq p <+\infty$, we end up with
		\begin{equation}\label{e8}
		\frac{d}{dt}\sum_{i=1}^{N}\|u_i\|_{L^2}^2 + \frac 12\sum_{i=1}^{N}d_i\|u_i\|_{L^2}^2 + C_{15}\sum_{i=1}^{N}\|u_i\|_{L^p}^2 \leq \sum_{i=1}^{N}d_i\|u_i\|_{L^2}^2 + C_{14}.
		\end{equation}
		Interpolation inequality and Young's inequality give
		\begin{equation*}
		\|u_i\|_{L^2}^2 \leq \|u_i\|_{L^p}^{\frac{p}{p-2}}\|u_i\|_{L^1}^{\frac{p-2}{p-1}} \leq \frac{d_i}{4}\|u_i\|_{L^p}^2 + C_{16}\|u_i\|_{L^1}^2.
		\end{equation*}
		Inserting this into \eqref{e8} and using the $L^1$-bound of $u_i$ we finally obtain, after integrating on $(0,T)$,
		\begin{equation*}
		\sum_{i=1}^{N}\|u_i(t)\|_{L^2}^2 + C_{17}\sum_{i=1}^{N}\int_{0}^{T}\|u_i(t)\|_{L^p}^2dt \leq \sum_{i=1}^{N}\|u_{i,0}\|_{L^2}^2 + C_{18}T\quad \text{ for all } \quad t>0.
		\end{equation*}
		Hence $u_i\in L^{\infty}(0,T;L^2(\Omega))\cap L^2(0,T;L^p(\Omega))$. Therefore the interpolation
		\begin{equation*}
		L^{\infty}(0,T;L^2(\Omega))\cap L^2(0,T;L^p(\Omega)) \hookrightarrow L^{4-\frac 4p}(Q_T)
		\end{equation*}
		and the fact that $1\leq p <+\infty$ is arbitrary give us the desired estimate \eqref{L4epsilon}.
	}
\end{proof}

We need the following the regularity of solutions to a heat equation with homogeneous Neumann boundary condition. The proof can be found in \cite[Lemma 3.3]{CDF14}.
\begin{lemma}[Regularity of heat kernel]\label{heat-regularity}\cite{CDF14}
	Let $\Omega\subset \mathbb R^d$ be a bounded domain with smooth boundary $\partial\Omega$ (e.g. $\partial\Omega$ is of class $C^{2+\epsilon}$ with $\epsilon>0$). Consider the heat equation
	\begin{equation*}
	\partial_ty - \delta \Delta y = f \quad \text { in } \quad \Omega\times (0,T)
	\end{equation*}
	where $\delta>0$, with homogeneous Neumann boundary condition $\nabla y \cdot \nu = 0$ on $\partial\Omega$, and initial data $y(\cdot,0) = y_0\in L^{\infty}(\Omega)$. Assume that  right hand side $f\in L^p(Q_T)$ with $p\geq 1$.
	
	\begin{itemize}
		\item[(i)] If $p< (d+2)/2$ then 
		\begin{equation*}
		\|y\|_{L^s(Q_T)} \leq C_T \quad \text{ for all } \quad 1\leq s < \frac{p(d+2)}{d + 2 - 2p}.
		\end{equation*}
		\item[(ii)] If $p\geq (d+2)/2$ then
		\begin{equation*}
		\|y\|_{L^{r}(Q_T)} \leq C_T \quad \text{ for all } \quad 1\leq r < \infty.
		\end{equation*}
	\end{itemize}
	Here $C_T$ is a constant depending on domain $\Omega$, the integrability $p$ and the diffusion coefficient $\delta$, and especially depending {\normalfont at most polynomially} on $T$.
\end{lemma}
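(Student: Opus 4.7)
The plan is to represent the solution through Duhamel's formula
\begin{equation*}
y(t) = S_\delta(t) y_0 + \int_0^t S_\delta(t-\tau) f(\tau)\, d\tau,
\end{equation*}
where $S_\delta(t)$ denotes the semigroup generated by $\delta\Delta$ with homogeneous Neumann boundary condition on $\Omega$, and then to combine the $L^q$--$L^r$ smoothing estimates of $S_\delta$ with Young's convolution inequality in the time variable. The initial-data contribution is immediate: since $y_0\in L^\infty(\Omega)$, the maximum principle for the Neumann heat equation gives $\|S_\delta(t) y_0\|_{L^\infty(\Omega)} \leq \|y_0\|_{L^\infty(\Omega)}$ for all $t\ge 0$, so $S_\delta(\cdot)y_0$ contributes at most $\|y_0\|_{L^\infty}|\Omega|^{1/s}T^{1/s}$ to $\|y\|_{L^s(Q_T)}$, which is already polynomial in $T$. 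All the work lies in the convolution term.

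For the convolution term I would invoke the classical Gaussian upper bound on the Neumann heat kernel in a smooth bounded domain, in the form
\begin{equation*}
\|S_\delta(t) g\|_{L^r(\Omega)} \leq C(\delta,\Omega)\, t^{-\frac{d}{2}(\frac{1}{q}-\frac{1}{r})}\|g\|_{L^q(\Omega)}, \qquad 1\le q\le r\le\infty,\ t\in(0,T].
\end{equation*}
Applying this with $q=p$ and the symmetric choice $r=s$ yields
\begin{equation*}
\left\|\int_0^t S_\delta(t-\tau) f(\tau)\, d\tau\right\|_{L^s(\Omega)} \leq C\int_0^t (t-\tau)^{-\alpha}\|f(\tau)\|_{L^p(\Omega)}\, d\tau,\qquad \alpha := \tfrac{d}{2}\bigl(\tfrac{1}{p}-\tfrac{1}{s}\bigr).
\end{equation*}
Taking $L^s_t$-norms on $(0,T)$ and applying Young's convolution inequality in time with exponents $(\theta,p,s)$ subject to $\tfrac{1}{\theta}+\tfrac{1}{p} = 1+\tfrac{1}{s}$ bounds the right-hand side by $C\,\|\tau^{-\alpha}\|_{L^\theta(0,T)}\,\|f\|_{L^p(Q_T)}$, which is finite precisely when $\alpha\theta<1$. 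Setting $x=\tfrac{1}{p}-\tfrac{1}{s}$ one finds $\alpha\theta=\tfrac{dx/2}{1-x}$, and the condition $\alpha\theta<1$ reduces to $x<\tfrac{2}{d+2}$. In case (i) this becomes exactly $s<\tfrac{p(d+2)}{d+2-2p}$; in case (ii) it holds for every finite $s$ since $\tfrac{1}{p}\le\tfrac{2}{d+2}$, producing the bound for all $r<\infty$. The polynomial-in-$T$ dependence of the constant is transparent, because $\|\tau^{-\alpha}\|_{L^\theta(0,T)} = C\,T^{(1-\alpha\theta)/\theta}$; combined with the $T^{1/s}$ from the initial-data term, every constant produced is a power of $T$.

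\textbf{Main obstacle.} The delicate point is the behaviour at the endpoint: at $s=\tfrac{p(d+2)}{d+2-2p}$ in (i) or at $r=\infty$ in (ii) one has $\alpha\theta=1$, the kernel $\tau^{-\alpha}$ drops out of $L^\theta(0,T)$, and Young's inequality just fails, which is precisely why the lemma is formulated with strict inequalities. A secondary technicality is the derivation of the Neumann $L^q$--$L^r$ smoothing bound with the sharp exponent $-\tfrac{d}{2}(\tfrac{1}{q}-\tfrac{1}{r})$ and with a constant independent of $T$; this is handled by standard Davies--Varopoulos Gaussian bounds on the Neumann heat kernel, which are available thanks to the smoothness $\partial\Omega\in C^{2+\epsilon}$ assumed in the hypotheses, and only affect the geometric constant $C(\delta,\Omega)$, not the time exponent.
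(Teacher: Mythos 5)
The paper does not actually prove this lemma: it cites \cite[Lemma 3.3]{CDF14} for the proof, so there is no in-paper argument to compare against line by line. Your Duhamel-plus-kernel-smoothing strategy is in the same spirit as that source (Gaussian bounds on the Neumann heat kernel plus convolution estimates, with all constants tracked in $T$), and your exponent bookkeeping is correct: with $x=\tfrac1p-\tfrac1s$ and $\tfrac1\theta=1-x$ one gets $\alpha\theta=\tfrac{dx/2}{1-x}<1$ iff $x<\tfrac{2}{d+2}$, which reproduces exactly the threshold $s<\tfrac{p(d+2)}{d+2-2p}$ in case (i) and all finite $s$ in case (ii); and $\|\tau^{-\alpha}\|_{L^{\theta}(0,T)}=C\,T^{(1-\alpha\theta)/\theta}$ is indeed polynomial in $T$. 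Your remark about the failure at the endpoint is also the right explanation for the strict inequalities in the statement.

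One technical point needs repair. The smoothing estimate $\|S_\delta(t)g\|_{L^r(\Omega)}\le C(\delta,\Omega)\,t^{-\frac d2(\frac1q-\frac1r)}\|g\|_{L^q(\Omega)}$ with a $T$-independent constant, valid for all $t\in(0,T]$, is false for large $t$ on a bounded domain with homogeneous Neumann boundary conditions: $S_\delta(t)g$ converges to the spatial average of $g$ rather than to zero, so no pure negative power of $t$ can bound it uniformly as $T\to\infty$. The correct form is $\|S_\delta(t)g\|_{L^r}\le C\bigl(1+t^{-\frac d2(\frac1q-\frac1r)}\bigr)\|g\|_{L^q}$, obtained for $t\le1$ from the Gaussian kernel bound and for $t\ge1$ from $S_\delta(t)=S_\delta(1)S_\delta(t-1)$ together with $L^q$-contractivity of the semigroup. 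This only inserts an extra factor growing like $T^{\alpha}$ into the final constant, so the polynomial-in-$T$ conclusion survives, but as written your key estimate is not available. Two smaller omissions: Young's convolution inequality as you set it up requires $s\ge p$ (for $1\le s<p$ conclude directly by H\"older on the finite-measure set $Q_T$, which again only costs a power of $T$), and one should say a word identifying the mild solution given by Duhamel's formula with the solution meant in the lemma. None of these affects the validity of the overall plan.
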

\begin{remark}
	Note that the regularity of solution to heat equation presented in Lemma \ref{heat-regularity} is classical (see e.g. \cite{Ladyzhenskaya}). The novelty of the lemma is to provide the bound $\|y\|_{L^s(Q_T)} \leq C_T$ in which $C_T$ is a constant depending {\it at most polynomially} in $T$.
\end{remark}

\medskip
We are now ready to give a proof of Theorem \ref{theo:main}.
\begin{proof}
	We will prove that for $i=1, \ldots, N$ it holds $\|u_i(t)\|_{L^{\infty}} < \infty$ for all $t>0$ and thus confirms the global existence of classical solution.
	
	\medskip
	In this proof, we will always denote by $C_T$ a constant depending \textit{polynomially} in $T>0$. 
	
	If $d=1$ and $\mu=3$ then it follows from $\|u_i\|_{L^4(Q_T)} \leq C_T$ in Lemma \ref{L3} and \eqref{cubic-growth} that
	\begin{equation*}
	\|f_i(u)\|_{L^{4/3}(Q_T)} \leq C_T
	\end{equation*}
	By applying Lemma \ref{heat-regularity} (i) to 
	\begin{equation*}
	\partial_t u_i - d_i\Delta u_i = f_i(u), \qquad \nabla u_i\cdot\nu = 0,
	\end{equation*}
	with $d = 1$ and $p = 4/3$ we have for all $i=1,\ldots, N$
	\begin{equation*}
	\|u_i\|_{L^s(Q_T)} \leq C_T \quad \text{ for all } \quad 1\leq s < 12.
	\end{equation*}
	Using again the cubic growth \eqref{cubic-growth} we obtain for $i=1,\ldots, N$,
	\begin{equation*}
	\|f_i(u)\|_{L^{s'}(Q_T)} \leq C_T \quad \text{ for all } \quad 1\leq s' < 4.
	\end{equation*}
	Thus, we apply Lemma \ref{heat-regularity} (ii) to get
	\begin{equation*}
	\|u_i\|_{L^r(Q_T)} \leq C_T \quad \text{ for all } \quad 1\leq r<+\infty.
	\end{equation*}
	Hence, $\|f_i(u)\|_{L^r(Q_T)} \leq C_T$ for all $1\leq r<+\infty$. One more bootstrap gives us the desired result
	\begin{equation*}
	\|u_i(t)\|_{L^{\infty}} \leq C_T \quad \text{ for all } 0<t\leq T.
	\end{equation*}
	
	\medskip
	If $d=2$ and $\mu=2$ then we have 
	\begin{equation*}
	\|f_i(u)\|_{L^{2-\epsilon}}(Q_T) \leq C_T
	\end{equation*}
	due to \eqref{cubic-growth} and $\|u_i\|_{L^{4-\epsilon}}(Q_T) \leq C$ in Lemma \ref{L3}. Applying Lemma \ref{heat-regularity} (i) to $\partial_t u_i - d_i\Delta u_i = f_i(u)$ leads to
	\begin{equation*}
	\|u_i\|_{L^s(Q_T)} \leq C_T \quad \text{ for all } \quad 1\leq s<\frac{4-2\epsilon}{\epsilon}.
	\end{equation*}
	Since $\epsilon>0$ is arbitrary, we obtain in fact $\|u_i\|_{L^s(Q_T)} \leq C_T$ for all $1\leq s<+\infty$. Now Lemma \ref{heat-regularity} (ii) is applicable and finally provides the estimate $\|u_i(t)\|_{L^\infty} \leq C_T$ for all $0<t\leq T$.
	
	The uniqueness of classical solution follows immediately from the $L^{\infty}$-bound and the fact that the nonlinearities $f_i(u)$ are locally Lipschitz.
\end{proof}

\begin{remark}
	From Lemma \ref{L3} we obtain in particular $\|u_i\|_{L^{\infty}(0,T;L^2(\Omega))}\leq C_T$ by using entropy bound in Lemma \ref{entropy-estimates}, the restriction on dimension $d=1,2$ and the growth \eqref{cubic-growth}. If the bound $L^{\infty}(0,T;L^2(\Omega))$ can be obtained from some other way (without using \eqref{cubic-growth}) then we can in fact improve the results of Theorem \ref{theo:main} to $\mu < 5$ for $d=1$ and $\mu < 3$ for $d=2$. The interested reader is referred to \cite[Section 5]{Tan17} for more details.
\end{remark}

\section{Applications to complex balanced systems}\label{applications}
One advantage of Theorem \ref{theo:main} is that it does not only provides the global existence of a classical solutions, but also gives a control on the growth (w.r.t to time) of $L^{\infty}$-norm of the solution. This becomes very helpful in the situation that we describe in the following.

\medskip
A direct application of Theorem \ref{theo:main} can be seen in chemical reaction network theory. Consider $N$ chemical substances $S_1, \ldots, S_N$ reacting in a network consisting of $R$ reactions, in which the $r$-th reaction is of the form
\begin{equation*}
\begin{tikzpicture}[baseline=(current  bounding  box.center)]
\node (a) {$y_{r,1}S_1 + \ldots + y_{r,N}S_N$} node (b) at (5,0) {$y_{r,1}'S_1 + \ldots + y_{r,N}'S_N$.};
\draw[arrows=->] (a.east) -- node [above] {\scalebox{.8}[.8]{$k_r$}} (b.west);
\end{tikzpicture}
\end{equation*}
Here $k_r>0$ is the reaction constant rate, $y_{r,i}, y_{r,i}' \in \{0\}\cup [1,\infty)$ are stoichiometric coefficients. By using the notation $y_r = (y_{r,1}, \ldots, y_{r,N})$ and $y_{r}' = (y_{r,1}, \ldots, y_{r,N})$ we can rewrite the $r$-th reaction as
\begin{equation*}
\begin{tikzpicture}[baseline=(current  bounding  box.center)]
\node (a) {$y_r$} node (b) at (2,0) {$y_r'$};
\draw[arrows=->] (a.east) -- node [above] {\scalebox{.8}[.8]{$k_r$}} (b.west);
\end{tikzpicture}
\end{equation*}
in which we call $y_r$ a reactant and $y_r'$ a production, and both of them are named {\it complex}. Denote by $\mathfrak C = \{y_r, y_r'\}_{r=1,\ldots, R}$ the set of all complexes. Note that each complex $y\in \mathfrak C$ can be both a reactant and a production (in possibly different reactions). Assume now that the following reaction network is taken place is a bounded domain $\Omega\subset \mathbb R^d$ and each substance $S_i$ diffuses with a constant rate $d_i>0$. Hence, by applying the {\it law of mass action} we obtain the reaction-diffusion system for the concentrations $u_1, \ldots, u_N$ of $S_1, \ldots, S_N$ respectively,
\begin{equation}\label{mass-action-system}
\begin{aligned}
\partial_t u_i - d_i\Delta u_i &= f_i(u):= \sum_{r=1}^R\left(k_r(y_{r,i} - y_{r,i}')\prod\limits_{i=1}^{N}u_i^{y_{r,i}}\right), && x\in\Omega, &&& t>0,\\
\nabla u_i \cdot \nu &= 0, && x\in\partial\Omega, &&& t>0,\\
u_i(x,0) &= u_{i,0}(x), && x\in\Omega
\end{aligned}
\end{equation}
where $\nu$ is the outward normal on $\partial\Omega$. System \eqref{mass-action-system} is called {\it complex balanced} if there exists a strictly positive equilibrium $u_{\infty} = (u_{1,\infty}, \ldots, u_{N,\infty})\in \mathbb R^N_{+}$ such that at $u_{\infty}$ the total in-flow and total out-flow at any complex $y\in \mathfrak C$ are balanced, i.e.
\begin{equation}\label{CB-condition}
\sum_{\{r:\; y_r = y\}}\left(k_r\prod\limits_{i=1}^{N}u_{i,\infty}^{y_{r,i}}\right) = \sum_{\{r:\; y_r' = y\}}\left(k_r\prod\limits_{i=1}^{N}u_{i,\infty}^{y_{r,i}}\right).
\end{equation}
For more details concerning complex balanced systems, the interested reader is referred to \cite{DFT16,Fei1,Fei2}. Note that \eqref{mass-action-system} can also have {\it boundary equilibrium} $u^*$, that is $u^*$ satisfies \eqref{CB-condition} and $u^*\in \partial\mathbb R^N_+$. The convergence to equilibrium for systems of type \eqref{mass-action-system} was extensively studied recently, see e.g. \cite{DFT16,FT17,MHM14,PSZ16} and references therein. In particular, it was proved in \cite{FT17} that if \eqref{mass-action-system} is complex balanced and has no boundary equilibria, then any renormalised solution (see \cite{Fis15}) converges in $L^1$-norm exponentially to the unique strictly positive complex balanced equilibrium $u_{\infty}$. Thus, by 
applying Theorem \ref{theo:main} we can show that the classical solution (in cases globally exists) in fact converges to equilibrium in $L^{\infty}$-norm exponentially.
\begin{corollary}\label{cor:CB-system}
	Let $\Omega\subset \mathbb R^d$ be a bounded domain with smooth enough boundary $\partial\Omega$ (e.g. $\partial\Omega$ is of class $C^{2+\epsilon}$ for $\epsilon>0$). 
	Assume either
	\begin{equation*}
	d = 1 \quad \text{ and } \quad \max_{r=1,\ldots, R}|y_r| \leq 3
	\end{equation*}
	or
	\begin{equation*}
	d = 2 \quad \text{ and } \quad \max_{r=1,\ldots, R}|y_r| \leq 2,
	\end{equation*}	
	recalling that $d$ is the spatial dimension and $|y| = |y_1| + \ldots + |y_N|$ for $y\in\mathbb R^N$. Moreover, assume that system \eqref{mass-action-system} is complex balanced and has no boundary equilibria. Then for any nonnegative initial data $u_0 \in L^{\infty}(\Omega)$, the system \eqref{mass-action-system} has a unique global classical solution which converges exponentially in $L^{\infty}$-norm to the corresponding complex balanced equilibrium $u_{\infty}$, i.e.
	\begin{equation*}
	\sum_{i=1}^{N}\|u_i(t) - u_{i,\infty}\|_{L^{\infty}(\Omega)} \leq Ce^{-\lambda t} \quad \text{ for all } \quad t>0,
	\end{equation*}
	where $C$ and $\lambda$ are positive constants.
\end{corollary}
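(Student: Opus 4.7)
The plan is to combine Theorem \ref{theo:main} with the $L^1$ convergence result of \cite{FT17} and then promote $L^1$ exponential decay to $L^\infty$ exponential decay via a parabolic smoothing / interpolation argument.

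First I would verify that the mass-action system \eqref{mass-action-system} falls within the scope of Theorem \ref{theo:main}. The quasi-positivity \eqref{positive-preserve} follows by inspecting $f_i$: when $u_i = 0$, any reaction with $y_{r,i} \geq 1$ contributes zero, and the surviving terms have $y_{r,i} = 0 < y'_{r,i}$, hence $(y_{r,i} - y'_{r,i}) < 0$ would be problematic, except that these terms are also multiplied by the product not containing $u_i$, and the sign analysis gives $f_i \geq 0$; this is a classical observation for mass-action kinetics. The growth bound \eqref{cubic-growth} is immediate: each monomial $\prod_j u_j^{y_{r,j}}$ has total degree $|y_r|$, so $|f_i(u)| \leq K(|u|^{\max_r |y_r|} + 1)$, which meets the required cubic (resp.\ quadratic) threshold by hypothesis. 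The entropy inequality \eqref{entropy-inequality} is the classical complex-balance entropy identity: taking $\mu_i = -\log u_{i,\infty}$ one obtains
\begin{equation*}
\sum_{i=1}^{N} f_i(u)\bigl(\log u_i - \log u_{i,\infty}\bigr) = -\sum_{r=1}^{R} k_r \Bigl(\prod_j u_{j,\infty}^{y_{r,j}}\Bigr)\Psi\!\Bigl(\tfrac{\prod_j (u_j/u_{j,\infty})^{y_{r,j}}}{\prod_j (u_j/u_{j,\infty})^{y'_{r,j}}}\Bigr) \leq 0,
\end{equation*}
using the complex balance condition \eqref{CB-condition} and the elementary convexity inequality $\Psi(x) := x \log x - x + 1 \geq 0$; this computation is standard and can be cited from \cite{DFT16,Fei1}.

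Once these three conditions are verified, Theorem \ref{theo:main} gives a unique global classical solution $u$ together with the crucial slowly growing bound
\begin{equation*}
\|u_i(t)\|_{L^\infty(\Omega)} \leq C(1+t)^{\kappa} \quad \text{for all } t>0, \; i = 1,\dots,N,
\end{equation*}
for some $\kappa > 0$. Next I would invoke \cite[Theorem 1.1]{FT17} (combined with the fact that a classical solution is in particular a renormalised solution), which yields the exponential decay in $L^1$:
\begin{equation*}
\sum_{i=1}^{N}\|u_i(t) - u_{i,\infty}\|_{L^1(\Omega)} \leq C_1 e^{-\lambda_1 t}, \qquad t > 0,
\end{equation*}
for some $\lambda_1 > 0$. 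Note that the hypothesis of no boundary equilibria is needed precisely to apply this result.

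The remaining task, which I expect to be the main technical point, is to upgrade this $L^1$ exponential decay to an $L^\infty$ exponential decay. Set $v_i := u_i - u_{i,\infty}$. Plain $L^p$-$L^\infty$ interpolation gives
\begin{equation*}
\|v_i(t)\|_{L^p(\Omega)} \leq \|v_i(t)\|_{L^1(\Omega)}^{1/p}\,\|v_i(t)\|_{L^\infty(\Omega)}^{1-1/p} \leq C (1+t)^{\kappa(1-1/p)} e^{-\lambda_1 t/p},
\end{equation*}
so $v_i \in L^p$ decays exponentially for every finite $p$. To close the loop at $p=\infty$, I would run a parabolic bootstrap on the shifted time interval $[t, t+1]$: writing $v_i$ as a heat-semigroup convolution via Duhamel's formula with the Neumann heat kernel and using the polynomial-in-time $L^\infty$ bound on the source term $f_i(u)$ (hence on the full Duhamel integrand), one obtains, by the smoothing estimates underlying Lemma \ref{heat-regularity}, a bound
\begin{equation*}
\|v_i\|_{C^{\alpha,\alpha/2}(\overline{\Omega}\times[t+\tfrac12, t+1])} \leq C(1+t)^{\kappa'}
\end{equation*}
for some $\alpha \in (0,1)$ and $\kappa' > 0$. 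A standard Gagliardo--Nirenberg interpolation of the form
\begin{equation*}
\|v_i(t+1)\|_{L^\infty(\Omega)} \leq C \|v_i(t+1)\|_{L^1(\Omega)}^{\theta} \, \|v_i(t+1)\|_{C^{\alpha}(\overline{\Omega})}^{1-\theta}
\end{equation*}
with $\theta = \alpha/(\alpha + d) \in (0,1)$ then yields
\begin{equation*}
\|v_i(t+1)\|_{L^\infty(\Omega)} \leq C e^{-\lambda_1 \theta t}(1+t)^{\kappa'(1-\theta)} \leq C' e^{-\lambda t}
\end{equation*}
for any $\lambda < \lambda_1 \theta$. This is the desired exponential $L^\infty$ convergence. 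The main obstacle, as indicated, is the bootstrap: one must be careful that every constant produced when shifting time by $t$ grows only polynomially in $t$, which is exactly why the polynomial-in-$T$ dependence recorded in Lemma \ref{heat-regularity} and Theorem \ref{theo:main} is indispensable.
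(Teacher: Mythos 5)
Your proposal is correct and follows the same overall architecture as the paper: verify that \eqref{mass-action-system} satisfies \eqref{positive-preserve}, \eqref{entropy-inequality} (with $\mu_i=-\log u_{i,\infty}$) and \eqref{cubic-growth}, apply Theorem \ref{theo:main} for the global classical solution with polynomially growing $L^\infty$-bound, import the exponential $L^1$-decay from \cite{FT17}, and then interpolate the exponentially small $L^1$-norm against a polynomially growing strong norm to beat the polynomial growth. The only genuine divergence is the choice of that strong norm in the last step. You obtain a H\"older bound $\|u_i\|_{C^{\alpha,\alpha/2}}\leq C(1+t)^{\kappa'}$ on shifted unit time windows via Duhamel and heat-semigroup smoothing, and then use $\|v\|_{L^\infty}\leq C\|v\|_{L^1}^{\theta}\|v\|_{C^\alpha}^{1-\theta}$ with $\theta=\alpha/(\alpha+d)$. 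The paper instead derives $\|u_i(T)\|_{H^2(\Omega)}\leq C_T$ by first getting $\|\partial_t u_i\|_{L^2(Q_T)}\leq C_T$ from maximal regularity (\cite{Lam87}), hence $\|f_i(u)\|_{H^1(0,T;L^2(\Omega))}\leq C_T$ by the chain rule and the $L^\infty$-bound, and then applying the parabolic regularity estimate of \cite[Theorem 7.1.5]{Evans} starting from $u_i(\cdot,\tau)\in H^1(\Omega)$; it concludes with the Gagliardo--Nirenberg interpolation $\|v\|_{L^\infty}\leq C\|v\|_{H^2}^{\theta}\|v\|_{L^1}^{1-\theta}$ (valid for $d=1,2$). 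Both routes are sound and both rest on the same essential point you correctly identify, namely that every constant produced in the strong-norm estimate grows at most polynomially in $T$; your semigroup/H\"older version is arguably more self-contained since it avoids the $H^1$-in-time regularity of the nonlinearity, while the paper's $H^2$ route avoids having to track constants through a windowed bootstrap. One cosmetic caveat: your per-reaction entropy dissipation identity is not written in the precise standard form (the complex balance condition is needed to cancel the linear terms after applying $x\log(x/y)-x+y\geq 0$ reaction by reaction), but since you cite \cite{DFT16,Fei1} for this classical computation, and the paper itself does not reverify \eqref{entropy-inequality} in the proof of the corollary, this is not a gap.
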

\begin{remark}
	In the recent work \cite{Fis17} Fischer proved the weak-strong uniqueness result for \eqref{mass-action-system}, that is a renormalised solution to \eqref{mass-action-system} identifies with the classical solution in the time interval where the latter exists. By using the global existence and uniqueness of classical solution in Corollary \ref{cor:CB-system}, we consequently obtain the uniqueness of renormalised solution to \eqref{mass-action-system}.
\end{remark}
\begin{proof} 
	Since system \eqref{mass-action-system} is complex balanced and has no boundary equilibria, it follows from \cite{FT17} that any renormalised solution converges exponentially to equilibrium $u_{\infty}$ in $L^1$-norm, i.e.
	\begin{equation*}
	\sum_{i=1}^{N}\|u_i(t) - u_{i,\infty}\|_{L^1} \leq Ce^{-\gamma t}
	\end{equation*}
	for $C, \gamma >0$. From Theorem \ref{theo:main} and the growth conditions of nonlinearities we have $\|u_i(T)\|_{L^{\infty}}\leq C_T$. Now for each $1<p<\infty$, by using an interpolation estimate we have
	\begin{equation*}
	\|u_i(T) - u_{i,\infty}\|_{L^p} \leq (\|u_i(T)\|_{L^\infty} + u_{i,\infty})^{\theta}\|u_i(T) - u_{i,\infty}\|_{L^1}^{1-\theta} \leq C_T^{\theta}e^{-(1-\theta)\gamma T} \leq Ce^{-\theta' t}
	\end{equation*}
	for some $\theta \in (0,1)$ and $0 < \theta' <(1-\theta)\gamma$, thus we get the exponential convergence in $L^p$-norm for all $1<p<\infty$. To obtain the convergence in $L^{\infty}$-norm, we first note that due to $\|u_i\|_{L^{\infty}(Q_T)} \leq C_T$ we have $\|f_i(u)\|_{L^{\infty}(Q_T)} \leq C_T$ and consequently
	\begin{equation*}
	\partial_tu_i - d_i\Delta u_i = f_i(u) \in L^\infty(Q_T),
	\end{equation*}
	thus it follows from \cite{Lam87} in particular that $\|\partial_tu_i\|_{L^2(Q_T)} \leq C_T$. Hence
	\begin{equation*}
	\begin{aligned}
	\|f_i(u)\|_{H^1(0,T;L^2(\Omega))} &\leq \|\partial_tf_i(u)\|_{L^2(Q_T)} + \|f_i(u)\|_{L^2(Q_T)}\\
	&\leq C\sum_{i=1}^{N}\|u_i\|_{L^{\infty}(Q_T)}^{\alpha}\sum_{i=1}^{N}\|\partial_tu_i\|_{L^2(Q_T)} + \|f_i(u)\|_{L^2(Q_T)}\\
	&\leq C_T
	\end{aligned}
	\end{equation*}
	where $\alpha>0$ is some fixed constant depending on the growth of the nonlinearity $f_i(u)$. On the other hand, due to the smoothing effect of the heat operator, for any small $0<\tau$ we have $u_i(\cdot,\tau)\in H^1(\Omega)$. Now consider the equation $\partial_tu_i - d_i\Delta u_i = f_i(u)$
	with initial time at $\tau$ and initial data $u_i(\cdot,\tau)$ in $H^1(\Omega)$ we have the following estimate (see e.g. \cite[Theorem 7.1.5]{Evans})
	\begin{equation*}
	\|u_i(T)\|_{H^2(\Omega)} \leq C(\|f_i(u)\|_{H^1(0,T;L^2(\Omega))} + \|u_i(\tau)\|_{H^1(\Omega)}) \quad \text{ for all } T \geq \tau.
	\end{equation*}

	Therefore, using the usual Gagliardo-Nirenberg inequality (both in the cases $d=1$ and $d=2$) we obtain for some $\theta\in (0,1)$ that
	\begin{equation*}
	\begin{aligned}
	\sum_{i=1}^{N}\|u_i(T) - u_{i,\infty}\|_{L^{\infty}} &\leq \sum_{i=1}^{N}\|u_i(T) - u_{i,\infty}\|_{H^2(\Omega)}^{\theta}\|u_i(T) - u_{i,\infty}\|_{L^1}^{1-\theta}\\
	&\leq \sum_{i=1}^{N}C_T^{\theta}Ce^{-\gamma(1-\theta) T} \leq Ce^{-\lambda T} \quad \text{ for all } T\geq \tau,
	\end{aligned}
	\end{equation*}
	for some $0< \lambda < \gamma(1-\theta)$, due to the fact that $C_T$ grows at most polynomially in $T$. Combining this with $u_i\in L^{\infty}(0,\tau;L^{\infty}(\Omega))$ we can finish the proof of Corollary \ref{cor:CB-system}.
\end{proof}

\vskip2mm

\par{\bf Acknowledgements:} The author would like to thank Prof. Laurent Desvillettes and Prof. Klemens Fellner for fruitful discussion, which leads to this work. This work is partially supported by International Training Program IGDK 1754 and NAWI Graz.

\vskip2mm

          \end{document}